\documentclass[11 pt, psamsfonts,  leqno]{amsart}
\usepackage{amssymb}
\usepackage{amsmath}	

\def\ignore#1{\relax}

\usepackage[utopia]{mathdesign}

\usepackage{bm}

 \textwidth=30cc
 \calclayout

\usepackage[plainpages=false, pdfpagelabels,  colorlinks=true, pdfstartview=FitV, linkcolor=blue, citecolor=blue, urlcolor=blue]{hyperref}







\numberwithin{equation}{section}
\numberwithin{figure}{section}



\def\Z{{\mathbb Z}}


\def\eps{\varepsilon}
 


\def\u #1 #2{\mathcal U(#1, #2)}  
\def\uhat #1 #2{\widehat{\mathcal U}(#1, #2)}  

\def\bmw #1{W_{#1}}  

\def\w #1 #2{\bmw {#1}^{(#2)}}  
\def\V #1 #2{V_{#1}^{(#2)}}  
\def\k #1 #2{ KT_{#1}^{(#2)}}
 

\def\ubold{{\bm u}}

\def\hods{\unskip\kern.55em\ignorespaces}



\theoremstyle{plain}
\newtheorem{theorem}{Theorem}[section]

\theoremstyle{plain}

\theoremstyle{plain}
\newtheorem{corollary}[theorem]{Corollary}

\theoremstyle{plain}
\newtheorem{lemma}[theorem]{Lemma}

\theoremstyle{definition}
\newtheorem{definition}[theorem]{Definition}
\theoremstyle{definition}

\theoremstyle{definition}
\newtheorem{remark}[theorem]{Remark}

\theoremstyle{remark}


\title[Degenerate Cyclotomic BMW algebras]{Admissibility conditions for Degenerate Cyclotomic BMW algebras}

\author{Frederick M. Goodman}
\address{ Department of Mathematics\\ University of Iowa\\ Iowa
City, Iowa}
\email{ goodman@math.uiowa.edu}

\subjclass[2000]{20C08, 16G99, 81R50}


\begin{document}
 To appear in {\em Communications in Algebra}
 \bigskip 
 
 \begin{abstract}

 We study admissibility conditions for the parameters of degenerate cyclotomic BMW algebras.  We show that the $u$--admissibility condition of  Ariki, Mathas and Rui is equivalent to a simple module theoretic condition.
 \end{abstract}

 \maketitle

 \section{Introduction}
The {\em cyclotomic Birman--Wenzl--Murakami (BMW)  algebras} are BMW analogues of cyclotomic Hecke algebras  ~\cite{ariki-koike, ariki-book},  while the {\em degenerate cyclotomic BMW algebras} are BMW analogues of degenerate cyclotomic Hecke algebras ~\cite{kleshchev-book}.  

The cyclotomic BMW algebras  were defined by 
H\"aring--Oldenburg in ~\cite{H-O2}  and have recently been studied by three groups of mathematicians:
Goodman and   Hauschild--Mosley ~\cite{GH1, GH2, GH3,  goodman-2008},  Rui, Xu, and Si  ~\cite{rui-2008, rui-2008b},   and Wilcox and Yu  ~\cite{Wilcox-Yu, Wilcox-Yu2, Wilcox-Yu3, Yu-thesis}.

Degenerate affine BMW algebras were introduced by Nazarov ~\cite{nazarov}  under the name {\em affine Wenzl algebras}.    
The   cyclotomic  quotients of these algebras were introduced by Ariki, Mathas, and Rui in \cite{ariki-mathas-rui}  and studied further by Rui and Si in ~\cite{rui-si-degnerate}, under the name {\em cyclotomic Nazarov--Wenzl algebras}.
(We propose to refer to these algebras as degenerate  affine  (resp. degenerate cyclotomic)  BMW algebras instead,   to bring the terminology in line with that used for degenerate affine and  cyclotomic Hecke algebras.)

A peculiar feature of the cyclotomic and degenerate cyclotomic BMW  algebras is that it is necessary to impose ``admissibility" conditions on the parameters entering into the definition of the algebras in order to obtain a satisfactory theory.   For the cyclotomic BMW algebras,  two apparently different conditions were proposed, one by Wilcox and Yu  ~\cite{Wilcox-Yu} and another by Rui and Xu  ~\cite{rui-2008}.  We recently showed ~\cite{goodman-admissibility}  that   the two conditions are equivalent.  Moreover, according to ~\cite{Wilcox-Yu},  admissibility  is equivalent to a simple module theoretic condition: the left ideal
$W_2 e$  generated by the ``contraction" $e$ in the two--strand algebra is free of the maximal possible rank.

It is natural to ask for similar results regarding the parameters of degenerate cyclotomic BMW algebras.
Ariki, Mathas and Rui ~\cite{ariki-mathas-rui}  introduced an admissibility condition (called $u$--admissibility) for these algebras, based on a heuristic involving  the rank of the left ideal $W_2 e$ in the two--strand algebra,  but up until now it has not been shown that their condition is equivalent to $W_2 e$ being free of maximal rank.
In this note, we introduce an analogue for the degenerate cyclotomic BMW algebras  of the admissibility condition of Wilcox and Yu ~\cite{Wilcox-Yu}, we show that this condition is equivalent to  $u$--admissibility, and that both conditions are equivalent to $W_2 e$ being free of maximal rank.

\section{Definitions}

\def\W{W}
\def\Waff{\W^{\text{aff}}}
\def\Set[#1]#2|#3|{\Big\{\ #2\ \Big| \
           \vcenter{\hsize #1mm\centering #3}\Big\}}

\def\map#1#2{\,{:}\,#1\!\longrightarrow\!#2}

Fix a positive integer $n$ and a commutative ring $R$ with
multiplicative identity.  Let  $\Omega=\{\omega_a: \ a\ge0\} $  be a sequence of elements of $R$.

\begin{definition}[\protect{Nazarov~\cite{nazarov}}]
\label{Waff relations}
 The  {\em degenerate affine BMW algebra}
$\Waff_n=\Waff_n(\Omega)$ is 
the unital associative $R$--algebra with generators
$\{s_i,e_i, x_j : \ 1\le i<n \text{ and }1\le j\le n \} $
and relations:
\begin{enumerate}
    \item (Involutions)\ \ 
$s_i^2=1$, for $1\le i<n$.
    \item (Affine braid relations)
\begin{enumerate}
\item $s_is_j=s_js_i$ if $|i-j|>1$,
\item $s_is_{i+1}s_i=s_{i+1}s_is_{i+1}$,  for $1\le i<n-1$,
\item $s_ix_j=x_js_i$ if $j\ne i,i+1$.
\end{enumerate}
    \item (Idempotent relations)\ \ 
$e_i^2=\omega_0e_i$, \ \  for $1\le i<n$.
    \item (Commutation relations)
\begin{enumerate}
\item $s_ie_j=e_js_i$, if $|i-j|>1$,
\item $e_ie_j=e_je_i$, if $|i-j|>1$,
\item $e_ix_j=x_je_i$,  if $j\ne i,i+1$,
\item $x_ix_j=x_jx_i$,  for $1\le i,j\le n$.
\end{enumerate}
    \item (Skein relations)\ \ 
        $s_ix_i-x_{i+1}s_i=e_i-1$,  and\ 
        $x_is_i-s_ix_{i+1}=e_i-1$,\  for 
        $1\le i<n$.
    \item (Unwrapping relations)\ \ 
        $e_1x_1^ae_1=\omega_ae_1$, for $a>0$.
    \item (Tangle relations)
\begin{enumerate}
\item $e_is_i=e_i=s_ie_i$,  for $1\le i\le n-1$,
\item $s_ie_{i+1}e_i=s_{i+1}e_i$, and  $e_ie_{i+1}s_i=e_i s_{i+1}$,  for $1\le i\le n-2$,
\item $e_{i+1}e_is_{i+1}=e_{i+1}s_i$,  and $s_{i+1}e_ie_{i+1}=s_i e_{i+1}$,  for $1\le i\le n-2$.
\end{enumerate}
    \item (Untwisting relations)\ \ 
        $e_{i+1}e_ie_{i+1}=e_{i+1}$,  and 
        $e_ie_{i+1}e_i=e_i$, for  
        $1\le i\le n-2$.
    \item (Anti--symmetry relations)\ \ 
        $e_i(x_i+x_{i+1})=0$,  and         $(x_i+x_{i+1})e_i=0$, for 
        $1\le i<n$.
\end{enumerate}
\end{definition}

\begin{definition}[Ariki, Mathas, Rui ~\cite{ariki-mathas-rui}]  \label{cyclo NZ}
Fix an integer $r\ge1$ and elements $u_1,\dots,u_r $ in $R$,  
The {\em degenerate cyclotomic  BMW  algebra} $\W_{r,n}=\W_{r,n}(u_1, \dots, u_r)$ is the
$R$--algebra $$\Waff_n(\Omega)/\langle(x_1-u_1)\dots(x_1-u_r)\rangle.$$
\end{definition}

Note that, due to the symmetry of the relations,  $\Waff_n$  has a unique $R$--linear  algebra involution $*$
(that is, an algebra anti-automorphism of order $2$)    
 such that $e_i^* = e_i$,  $s_i^* = s_i$,  and $x_i^* = x_i$   for all $i$.  The involution passes to cyclotomic quotients.

\begin{lemma}[see ~\cite{ariki-mathas-rui}, Lemma 2.3]                        \label{SX^a}
In the degenerate affine BMW algebra $\Waff_n$,   for  $1\le i<n$ and  $a\ge1$, one has
\begin{equation} \label{equation: S X^a}
s_ix_i^a=x_{i+1}^as_i+\sum_{b=1}^ax_{i+1}^{b-1}(e_i-1)x_i^{a-b}.
\end{equation}
\end{lemma}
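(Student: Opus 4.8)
The plan is to argue by induction on $a$, using the skein relation of Definition~\ref{Waff relations}(5) as the only engine. For the base case $a=1$, the claimed identity reads $s_ix_i = x_{i+1}s_i + (e_i-1)$, which is exactly the first skein relation, once one recognizes that the lone summand $x_{i+1}^{0}(e_i-1)x_i^{0}$ is just $e_i-1$. So the base case requires no computation beyond this identification.

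For the inductive step I would assume the formula holds for some $a\ge 1$ and multiply both sides on the right by $x_i$, obtaining
$$s_ix_i^{a+1} = x_{i+1}^a s_i x_i + \sum_{b=1}^{a} x_{i+1}^{b-1}(e_i-1)x_i^{a-b+1}.$$
The second summand already has the shape of the tail of the desired sum, since $a-b+1 = (a+1)-b$. The only term that needs rewriting is $x_{i+1}^a s_ix_i$: here I would apply the base-case skein relation $s_ix_i = x_{i+1}s_i + (e_i-1)$ to rewrite it as $x_{i+1}^{a+1}s_i + x_{i+1}^a(e_i-1)$. The leftover $x_{i+1}^a(e_i-1)$ is precisely the missing $b=a+1$ term of $\sum_{b=1}^{a+1}x_{i+1}^{b-1}(e_i-1)x_i^{(a+1)-b}$, so collecting everything yields the formula at level $a+1$ and closes the induction.

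The only genuine obstacle is the index bookkeeping: one must verify that the single extra term produced by the skein relation slots in as the final summand, rather than duplicating or skipping an index in the sum. A quick check at $b=a+1$ gives $x_{i+1}^{(a+1)-1}(e_i-1)x_i^{0}=x_{i+1}^a(e_i-1)$, which settles this. No braid, tangle, or commutation relations are needed, and the whole computation takes place inside the subalgebra generated by $s_i$, $e_i$, $x_i$, and $x_{i+1}$.
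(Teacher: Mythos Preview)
Your proof is correct; the induction on $a$ using only the skein relation $s_ix_i = x_{i+1}s_i + (e_i-1)$ is exactly the standard argument. The paper does not give its own proof of this lemma, simply citing \cite{ariki-mathas-rui}, Lemma~2.3, where the same induction is carried out.
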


Taking $i = 1$ in Lemma \ref{SX^a},  pre-- and post--multiplying by $e_1$ and simplifying using the relations gives:
\begin{equation} \label{equation: weak admissibility1}
\omega_a e_1  =(-1)^a\omega_a e_1
         +\sum_{b=1}^a (-1)^{b-1}\omega_{b-1}\omega_{a-b}e_1
         +\sum_{b=1}^a(-1)^b\omega_{a-1}e_1
\end{equation}

For $a$ odd, this gives
\begin{equation} \label{equation: weak admissibility2}
2 \omega_a e_1  =
         \sum_{b=1}^a (-1)^{b-1}\omega_{b-1}\omega_{a-b}e_1
        - \omega_{a-1}e_1,
\end{equation}
which is Corollary 2.4 in ~\cite{ariki-mathas-rui}.    As noted in ~\cite{ariki-mathas-rui},   the identity derived from
(\ref{equation: weak admissibility1})  in case $a$ is even is a tautology.  

Consider the cyclotomic algebra $W_{r, n}(u_1, \dots, u_r)$,  and let $a_j$ denote the signed elementary symmetric function in $u_1, \dots, u_r$,   namely, $a_j = (-1)^{r -j}  \eps_{r-j}(u_1, \dots, u_r)$.     Thus, in the cyclotomic algebra, we have the relation $\sum_{j = 0}^r  a_j  x_1^j = 0$.    Multiplying by $x_1^a$  for an arbitrary $a \ge 0$  and pre-- and post--multiplying by $e_1$  gives
\begin{equation}   \label{equation: weak admissibility3}
\sum_{j = 0}^r  a_j  \omega_{j + a} e_1 = 0.
\end{equation}

\begin{corollary}  \label{corollary:   weak admissibility conditions}
Consider the cyclotomic algebra $W_{r, n}(u_1,  \dots, u_r)$.    If $e_1$ is not a torsion element over $R$,  then we have:
\begin{enumerate}
\item  $2 \omega_a   =
         \sum_{b=1}^a (-1)^{b-1}\omega_{b-1}\omega_{a-b}
        - \omega_{a-1}$,  for  all  odd $a \ge 1$,  and
\item $ \sum_{j = 0}^r  a_j  \omega_{j + a}  = 0$,  for all $a \ge 0$.  
\end{enumerate}
\end{corollary}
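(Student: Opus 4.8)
The plan is to observe that both asserted identities follow from the already-established relations (\ref{equation: weak admissibility2}) and (\ref{equation: weak admissibility3}) simply by cancelling the common factor $e_1$, and that this cancellation is exactly what the non-torsion hypothesis licenses. So the work is to recast each relation as a statement that a single \emph{scalar} annihilates $e_1$, and then to apply the hypothesis.

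First I would rewrite each relation as the assertion that an element of $R$ annihilates $e_1$. For part (1), relation (\ref{equation: weak admissibility2}) reads, for odd $a \ge 1$,
\begin{equation*}
\Big(2\omega_a - \sum_{b=1}^a (-1)^{b-1}\omega_{b-1}\omega_{a-b} + \omega_{a-1}\Big) e_1 = 0,
\end{equation*}
and the coefficient in parentheses lies in $R$ because each $\omega_c$ does. Likewise, for part (2), relation (\ref{equation: weak admissibility3}) reads, for each $a \ge 0$,
\begin{equation*}
\Big(\sum_{j=0}^r a_j \omega_{j+a}\Big) e_1 = 0,
\end{equation*}
and again the coefficient lies in $R$, since the $a_j$ and the $\omega_c$ are all elements of $R$ by construction.

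Second, I would invoke the hypothesis. To say that $e_1$ is not a torsion element over $R$ means precisely that the $R$-module map $R \to \W_{r,n}$ given by $\lambda \mapsto \lambda e_1$ is injective, i.e.\ $\lambda e_1 = 0$ with $\lambda \in R$ forces $\lambda = 0$. Applying this to the two displayed coefficients immediately yields identities (1) and (2).

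The statement is thus almost immediate given what precedes it, and the only point requiring any care — the closest thing to an obstacle — is the bookkeeping needed to confirm that the coefficients in (\ref{equation: weak admissibility2}) and (\ref{equation: weak admissibility3}) really are central scalars in $R$ rather than genuine algebra elements. This matters because ``$(\text{scalar}) \cdot e_1 = 0$'' is a statement about torsion of $e_1$ over $R$, not merely about $e_1$ being a zero divisor in $\W_{r,n}$, and it is only the former that the hypothesis controls. Since the relevant coefficients are assembled solely from the $\omega_c$ and the $a_j$, all of which lie in $R$, this is clear, and the corollary follows.
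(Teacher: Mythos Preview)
Your proof is correct and is exactly the argument the paper intends: the corollary is stated immediately after equations (\ref{equation: weak admissibility2}) and (\ref{equation: weak admissibility3}) precisely because it follows from them by cancelling the scalar coefficient of $e_1$ under the non-torsion hypothesis. The paper does not even write out a separate proof, since this cancellation is the only step.
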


\begin{definition}  Say that the parameters $\omega_a$   ($a \ge 0$)  and $u_1,  \dots, u_r$ are {\em weakly admissible},   or that the ground ring $R$ is {\em weakly admissible},  if the relations of Corollary \ref{corollary:   weak admissibility conditions} hold.
\end{definition}

Weak admissibility is a non--triviality condition for the cyclotomic algebras;  if the ground ring is a field, and weak admissibility fails, then $e_1 = 0$,  and the cyclotomic algebra reduces to a specialization of the degenerate cyclotomic Hecke algebra, see ~\cite{ariki-mathas-rui}, pages 60--61.

In the following, we use the notation $\delta_{(P)} = 1$ if $P$ is true and  $\delta_{(P)} = 0$ if $P$ is false.

\begin{lemma}  In the degenerate affine BMW algebra, for $a \ge 1$,  we have
\begin{equation} \label{equation: s x^a e}
s_1 x_1^a e_1 = (-1)^a x_1^a e_1 + \sum_{b = 1}^a (-1)^{b-1} \omega_{a-b} x_1^{b-1} e_1   -  \delta_{(\text{$a$ is odd})}\ x_1^{a-1} e_1.
\end{equation}
\end{lemma}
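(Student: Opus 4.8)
The plan is to obtain this directly from Lemma~\ref{SX^a} by right-multiplying by $e_1$ and reducing each occurrence of $x_2$ and each sandwiched pair of $e_1$'s via the remaining relations. Setting $i = 1$ in (\ref{equation: S X^a}) and multiplying on the right by $e_1$ gives
\[
s_1 x_1^a e_1 = x_2^a\, s_1 e_1 + \sum_{b=1}^a x_2^{b-1}(e_1 - 1) x_1^{a-b} e_1,
\]
and the tangle relation $s_1 e_1 = e_1$ turns the first term into $x_2^a e_1$.

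The central tool is the anti-symmetry relation $(x_1 + x_2) e_1 = 0$, that is, $x_2 e_1 = -x_1 e_1$. Combined with the commutation relation $x_1 x_2 = x_2 x_1$, a one-line induction on $k$ gives $x_2^k e_1 = (-1)^k x_1^k e_1$ for all $k \ge 0$. Applied to the first term, this produces the leading contribution $(-1)^a x_1^a e_1$ of the asserted identity.

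For the sum I would split $(e_1 - 1)$ into its two parts. In the $e_1$ part, the middle factor $e_1 x_1^{a-b} e_1$ collapses by the unwrapping relation to $\omega_{a-b} e_1$ (this also covers the case $a - b = 0$, where $e_1 x_1^0 e_1 = e_1^2 = \omega_0 e_1$ by the idempotent relation), and then $x_2^{b-1} e_1 = (-1)^{b-1} x_1^{b-1} e_1$ yields exactly $\sum_{b=1}^a (-1)^{b-1}\omega_{a-b} x_1^{b-1} e_1$, the middle term of (\ref{equation: s x^a e}). In the $-1$ part, commuting $x_2^{b-1}$ past $x_1^{a-b}$ and again converting $x_2^{b-1} e_1$ to $(-1)^{b-1} x_1^{b-1} e_1$ collapses every summand to $(-1)^{b-1} x_1^{a-1} e_1$, so this part contributes $-\bigl(\sum_{b=1}^a (-1)^{b-1}\bigr) x_1^{a-1} e_1$.

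There is no genuine obstacle; the only point needing care is the bookkeeping of signs. The alternating sum $\sum_{b=1}^a (-1)^{b-1}$ equals $1$ for odd $a$ and $0$ for even $a$, which is precisely $\delta_{(\text{$a$ is odd})}$, and assembling the three contributions gives (\ref{equation: s x^a e}).
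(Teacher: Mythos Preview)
Your proof is correct and follows precisely the approach indicated in the paper, which simply says to take $i=1$ in equation~(\ref{equation: S X^a}), post-multiply by $e_1$, and simplify using the relations. You have carried out exactly that simplification, with the relevant relations ($s_1e_1=e_1$, $(x_1+x_2)e_1=0$, $x_1x_2=x_2x_1$, and $e_1x_1^ke_1=\omega_ke_1$) identified and applied correctly.
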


\begin{proof}  Take $i = 1$ in equation  (\ref{equation: S X^a}).  Post--multiply   by $e_1$, and simplify,  using 
the relations.  
\end{proof} 
 
 \section{$u$--admissibility}
 
 The definition of $u$--admissibility is motivated by Theorem \ref{theorem:  ariki mathas rui} below,  which is essentially contained in  ~\cite{ariki-mathas-rui}, although not explicitly stated there.  
 
 \begin{lemma} \label{lemma: spanning lemma}
  Let $R$ be any ground ring with parameters $\omega_a$ for $a \ge 0$ and
 $u_1, \dots, u_r$.   Let $\bmw {2, R}$  denote the two strand degenerate cyclotomic BMW algebra defined over $R$.  Then 
 \begin{enumerate}
 \item The left
  ideal $\bmw {2, R} \ e_1$  equals the $R$--span of $\{ e_1, x_1 e_1, \dots, x_1^{r-1} e_1\}$. 
  \item $\bmw {2, R}$ is spanned over $R$ by   $\{ x_1^a e_1 x_1^b,\  x_1^a x_2^b s_1, \ x_1^a x_2^b  : 0 \le a, b \le r-1\} $
  \end{enumerate}
\end{lemma}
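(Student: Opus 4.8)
The plan is to prove both spanning statements by reduction arguments that push all occurrences of $s_1$ and $e_1$ to convenient positions and then use the two defining structural facts: the cyclotomic relation, which lets me reduce any power $x_1^j$ with $j \ge r$ to lower powers, and the unwrapping/anti-symmetry relations, which control what happens when $x$'s meet an $e_1$.

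First I would establish part (1). The left ideal $\bmw{2,R}\, e_1$ is spanned by all products $w\, e_1$ where $w$ ranges over a spanning set of the whole algebra, so it suffices to show that every such $w\, e_1$ lies in the $R$-span of $\{x_1^k e_1 : 0 \le k \le r-1\}$. The key reductions are: the anti-symmetry relation $e_1(x_1 + x_2) = 0$ (equivalently $x_2 e_1 = -x_1 e_1$, and by $*$ also $e_1 x_2 = -e_1 x_1$), which eliminates $x_2$ in favor of $x_1$ whenever it sits against $e_1$; the tangle relation $s_1 e_1 = e_1$, which absorbs a leading $s_1$; and equation (\ref{equation: s x^a e}), which rewrites $s_1 x_1^a e_1$ as an $R$-combination of $x_1^k e_1$. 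These three facts let me convert any monomial $w\,e_1$ into an $R$-combination of pure powers $x_1^k e_1$. Finally the cyclotomic relation $\sum_{j=0}^r a_j x_1^j = 0$ (with $a_r = 1$) lets me rewrite $x_1^r e_1$, and inductively any $x_1^k e_1$ with $k \ge r$, as a combination of $\{x_1^i e_1 : 0 \le i \le r-1\}$. The reverse inclusion is trivial, so part (1) follows.

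For part (2), the strategy is to show the proposed set is closed, up to $R$-linear combinations, under left multiplication by each generator $s_1$, $e_1$, $x_1$, $x_2$, which (since it visibly contains $1$) forces it to span the whole algebra. Multiplication by $x_1$ on the left of $x_1^a x_2^b$, $x_1^a x_2^b s_1$, or $x_1^a e_1 x_1^b$ simply raises the $x_1$-exponent and then one reduces via the cyclotomic relation; here one uses the skein relations (\ref{equation: S X^a}) and commutation relations to move $x_1$ or $x_2$ past $s_1$, producing correction terms that again have the required form. Multiplication by $s_1$ is handled using the skein relation $s_1 x_1 = x_2 s_1 + (e_1 - 1)$ to move $s_1$ to the right, together with $s_1^2 = 1$ and $s_1 e_1 = e_1$; multiplication by $e_1$ uses $e_1 x_1^a e_1 = \omega_a e_1$ (the unwrapping relations), $e_1 s_1 = e_1$, and the anti-symmetry relations to collapse the middle and produce terms of the form $x_1^a e_1 x_1^b$. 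In each case the cyclotomic relation caps the exponents at $r-1$.

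The main obstacle will be the bookkeeping in part (2): verifying closure under left multiplication by $s_1$ and by $e_1$ requires repeatedly moving a generator past the $x$'s and the middle $e_1$, and each move (via the skein relations) spawns lower-order correction terms that must themselves be checked to land back in the span. In particular the case $e_1 \cdot (x_1^a x_2^b s_1)$ and $s_1 \cdot (x_1^a e_1 x_1^b)$ are the delicate ones, since one must combine the unwrapping relations with (\ref{equation: s x^a e}) and the cyclotomic reduction simultaneously. The argument is entirely a finite induction on the total degree $a+b$, so no genuinely new idea is needed beyond organizing these reductions carefully; the content is that the relations never force an exponent to stay above $r-1$ nor produce a word outside the three listed shapes.
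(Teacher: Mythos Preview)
Your proposal is correct and follows essentially the same strategy as the paper: for part (1) you show that the $R$--span of $\{x_1^k e_1 : 0 \le k \le r-1\}$ is invariant under left multiplication by each generator (using the anti--symmetry relation for $x_2$, the formula (\ref{equation: s x^a e}) for $s_1$, unwrapping for $e_1$, and the cyclotomic relation for $x_1$), which is exactly what the paper does; for part (2) you run the analogous closure argument, which the paper merely indicates by ``similar'' with a reference to \cite{ariki-mathas-rui}, Proposition~2.15. One small clarification worth adding: closure under left multiplication by $x_2$ in part (2) is automatic once you have closure under $s_1$, $e_1$, $x_1$, since $x_2 = s_1 x_1 s_1 - e_1 + s_1$; this saves you from having to reduce $x_2^r$ directly.
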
  

\begin{proof}  Using Lemma \ref{SX^a},  and the defining relations of the algebra,  one sees that the
span of $\{ e_1, x_1 e_1, \dots, x_1^{r-1} e_1\}$ is invariant under left multiplication by the generators $x_1, e_1, s_1$,   and
that  $x_2  x_1^a e_1 = -x_1^{a + 1} e_1$.    This proves part (1).  Part (2) is similar, see ~\cite{ariki-mathas-rui},  Proposition 2.15.
\end{proof}

 \begin{theorem}[\cite{ariki-mathas-rui}]  \label{theorem:  ariki mathas rui}
  Let $F$ be  a field of characteristic $\ne 2$, with parameters
 $\omega_a$ for $a \ge 0$  and $u_1, \dots, u_r$.  Assume that the $u_i$ are distinct and
 $u_i + u_j \ne 0$ for all $i, j$.    Let $W = W_{2, F}$  be the degenerate cyclotomic BMW algebra defined over $F$ with parameters $\omega_a$ for $a \ge 0$  and $u_1, \dots, u_r$.
Then the following conditions are equivalent:
 \begin{enumerate}
 \item $\{e_1, x_1 e_1, \dots, x_1^{r-1} e_1\} \subseteq  W e_1$ is linearly independent over $F$, and $e_1 W e_1 \ne 0$.    
 \item For all $a \ge 0$,   $\omega_a = \sum_{i = 1}^r  \gamma_i u_i^a$,  where
\begin{equation} \label{equation: definition of the gammas}
\gamma_i=(2u_i-(-1)^r)\displaystyle
        \prod_{j\ne i}\dfrac{u_i+u_j}{u_i-u_j},
  \end{equation}
        and some $\omega_a$ is non--zero. 
  \item   $W$  admits a module $M$  with basis   $\{ v_0, x_1 v_0, \dots, x_1^{r-1} v_0\}$
 such that $e_1 M  = F v_0$.
  \ignore{
 \item  $F$ is weakly admissible, and $W$  admits a module $M$  with basis \break  $\{ v_0, x_1 v_0, \dots, x_1^{r-1} v_0\}$ such $e_1$,  $s_1$, and $x_2$  act by the formulas: $e_1 x_1^j v_0 = \omega_j v_0$,  $x_2 x_1^j v_0 = - x_1^{j+1} v_0$, and 
 \begin{equation}
s_1 x_1^a v_0 = (-1)^a x_1^a v_0 + \sum_{b = 1}^a (-1)^{b-1} \omega_{a-b} x_1^{b-1} v_0   -  \delta_{(\text{$a$ is odd})}\ x_1^{a-1} e_1. 
 \end{equation}
}

\end{enumerate}
 \end{theorem}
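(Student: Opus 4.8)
My plan is to prove the two biconditionals $(1)\Leftrightarrow(3)$ and $(3)\Leftrightarrow(2)$, since the left ideal $We_1$ is the obvious candidate for the module $M$. For $(1)\Rightarrow(3)$ I would take $M=We_1$ with $v_0=e_1$. By Lemma~\ref{lemma: spanning lemma}(1) the set $\{e_1,x_1e_1,\dots,x_1^{r-1}e_1\}$ spans $M$, and hypothesis (1) says it is linearly independent, so it is a basis of the required form. The anti-symmetry relation $(x_1+x_2)e_1=0$ shows $x_2$ acts as $-x_1$ on $M$, and Lemma~\ref{lemma: spanning lemma}(1) together with the unwrapping relation gives $e_1M=e_1We_1=\mathrm{span}\{e_1x_1^be_1\}=\mathrm{span}\{\omega_be_1\}$, which equals $Fe_1=Fv_0$ precisely because (1) also assumes $e_1We_1\ne0$. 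Conversely, for $(3)\Rightarrow(1)$, given such an $M$ I pick $m$ with $e_1m=v_0$; any relation $\sum_j\alpha_jx_1^je_1=0$ in $W$ then yields $\sum_j\alpha_jx_1^jv_0=0$ upon acting on $m$, forcing all $\alpha_j=0$ by independence of the basis, while $e_1We_1\ne0$ follows because $e_1$ acts nonzero on $M$ and some $\omega_a\ne0$ (else $e_1x_1^jv_0=\omega_jv_0=0$ for all $j$, contradicting $e_1M=Fv_0$).

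The heart of the matter is the equivalence with the explicit formula. For $(3)\Rightarrow(2)$, the cyclotomic relation $\prod_i(x_1-u_i)=0$ and the distinctness of the $u_i$ make $x_1$ diagonalizable on $M$; let $v_i$ be an eigenbasis with $x_1v_i=u_iv_i$, normalized so that $v_0=\sum_iv_i$. Writing $e_1w=\phi(w)v_0$ for the rank-one operator $e_1$ and setting $\gamma_i=\phi(v_i)$, the unwrapping relation $e_1x_1^ae_1=\omega_ae_1$ gives at once $\omega_a=\sum_i\gamma_iu_i^a$, which is the shape asserted in (2). To pin down the $\gamma_i$ I would feed $x_2=-x_1$ into the skein relation $s_1x_1-x_2s_1=e_1-1$; in the eigenbasis this forces the matrix of $s_1$ to be $S_{ij}=(\gamma_j-\delta_{ij})/(u_i+u_j)$, which is legitimate since $u_i+u_j\ne0$. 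The tangle relation $s_1e_1=e_1$ forces $s_1v_0=v_0$, i.e.\ every row of $S$ sums to $1$, which is the Cauchy-type system $\sum_j\gamma_j/(u_i+u_j)=1+1/(2u_i)$. As the Cauchy matrix $(1/(u_i+u_j))$ is invertible, this determines the $\gamma_i$ uniquely, and a partial-fractions computation identifies the solution with \eqref{equation: definition of the gammas}; that some $\omega_a\ne0$ reflects $e_1M\ne0$ via a Vandermonde argument.

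For the converse $(2)\Rightarrow(3)$ I would build $M$ directly: set $M=\bigoplus_iFv_i$, declare $x_1v_i=u_iv_i$, $x_2=-x_1$, $v_0=\sum_iv_i$, $e_1v_i=\gamma_iv_0$, and $s_1v_i=\sum_jS_{ji}v_j$ with $S$ as above, and then verify every defining relation of the two-strand algebra. Because $n=2$, all relations involving $s_2,e_2$ or higher indices are vacuous; the relations in $x_1,x_2,e_1$ alone (commutation, anti-symmetry, $e_1^2=\omega_0e_1$, unwrapping, and the cyclotomic relation) are immediate from $x_2=-x_1$ and $\omega_a=\sum_i\gamma_iu_i^a$; the skein relation holds by the very definition of $S$; and $s_1e_1=e_1=e_1s_1$ reduce, through the row-sum identity, to the same Cauchy system satisfied by the $\gamma_i$. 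The one genuinely substantial check---and the main obstacle of the whole theorem---is the involution relation $s_1^2=1$, that is $S^2=I$: this is where the precise value \eqref{equation: definition of the gammas} of $\gamma_i$ (rather than merely the existence of a solution to the linear system) must be used, and it comes down to a partial-fraction/residue identity for $\sum_k(\gamma_k-\delta_{ik})(\gamma_j-\delta_{kj})/\bigl((u_i+u_k)(u_k+u_j)\bigr)$. Once $S^2=I$ is established, $M$ is a bona fide $W$-module with the stated basis and $e_1M=Fv_0$, which is (3). Throughout, the hypotheses $\mathrm{char}\,F\ne2$ and $u_i+u_j\ne0$ are what permit division by $2u_i$ and by $u_i+u_j$.
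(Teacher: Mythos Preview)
Your treatment of $(1)\Leftrightarrow(3)$ and $(3)\Rightarrow(2)$ is correct and matches the paper closely: the paper states $(1)\Rightarrow(3)$ as obvious and carries out $(3)\Rightarrow(2)$ via exactly the diagonalization of $x_1$, the skein relation to determine the matrix of $s_1$, and the tangle relation $s_1e_1=e_1$ to obtain the Cauchy system \eqref{equation: derive formula for  gammas 2}. Your added direct argument for $(3)\Rightarrow(1)$ is fine, though the paper does not need it since it closes the cycle differently.

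The genuine difference is in the remaining implication. The paper does \emph{not} construct $M$ directly; it proves $(2)\Rightarrow(1)$ by invoking Theorem~A of \cite{ariki-mathas-rui}, which under $u$--admissibility produces an $R$--basis of the full algebra $W_{r,n}$ containing $\{e_1,x_1e_1,\dots,x_1^{r-1}e_1\}$, whence linear independence is immediate. Your route---building $M=\bigoplus_i Fv_i$ by hand and verifying the defining relations---is a legitimate and more self-contained alternative, and is in fact close in spirit to how \cite{ariki-mathas-rui} establishes its representation-theoretic results in the first place. The cost is exactly what you identify: the verification of $S^2=I$ is real work (a nontrivial residue computation using the explicit form \eqref{equation: definition of the gammas}), and you have flagged it rather than carried it out. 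The paper sidesteps that computation here by absorbing it into the citation. So both approaches are sound; the paper's is short given the black box, while yours would stand on its own once the $S^2=I$ identity is actually proved.
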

 
 \begin{proof} The implication (1) $\implies$ (3) is obvious.  
 
 Assume condition (3).  We have $v_0 = e_1 m$  for some $m\in M$,  so
 $e_1 x^j v_0 = e_1 x^j e_1 m = \omega_j e_1 m = \omega_j v_0$  for $1 \le j \le r-1$. 
 Moreover, some $\omega_j \ne 0$  since $e_1 M \ne (0)$.  
 Define $p_i \in \bmw  {1, F}$ by $p_i = \displaystyle \prod_{j \ne i} \frac{x_1 - u_j}{u_i - u_j}$.  Then $p_i^2 = p_i$,  $\sum_i p_i = 1$,  and $x_1 p_i = u_i p_i$.   
 Define $m_i \in M$ by $m_i = p_i v_0$.   Then $m_i \ne 0$ by the assumed linear independence of  $\{ v_0, x_1 v_0, \dots, x_1^{r-1} v_0\}$,   $x_1 m_i = u_i m_i$, and
 $\sum_i m_i = (\sum_i p_i) v_0 = v_0$.     It follows that $\{m_1, \dots, m_r\}$ is linearly independent, since the $m_i$ are eigenvectors for $x_1$  with distinct eigenvalues.   Define $\kappa_j$ and $c_{i, j}$ in $F$ by
 $e_1 m_j = \kappa_j v_0 = \kappa_j \sum_i m_i$,  and $s_1 m_j = \sum_i c_{i, j} m_i$.    (It will be shown that 
 $\kappa_j = \gamma_j$,  where $\gamma_j$ is defined above.)    Note that $e_1 M \ne (0)$ implies that $\kappa_j \ne 0$ for some $j$. 
 
 The argument continues as in the proof of Theorem 3.2 in ~\cite{ariki-mathas-rui},  pp. 65--67.   Apply the identity
 $x_1 s_1 - s_1 x_2 - e_1 + 1 = 0$ to $m_j$ to derive a formula for $c_{i, j}$, 
 $$
 c_{i, j} =  (\kappa_j - \delta_{i, j})/(u_i + u_j).
 $$
 Next apply the identity $e_1 = s_1 e_1$ to $m_i$ to get 
 \begin{equation} \label{equation: derive formula for  gammas 1}
 \kappa_i\sum_{j=1}^d m_j=e_1 m_i=s_1 e_1 m_i
       =\kappa_i\sum_{j=1}^d\Big\{\frac{\kappa_j-1}{2u_j}
            +\sum_{k\ne j}\frac{\kappa_k}{u_j+u_k}\Big\}m_j,
 \end{equation}
for $i=1,\dots,r$.  Since at least one $\kappa_i$ is non--zero,  matching coefficients in  
(\ref{equation: derive formula for  gammas 1}) gives the equations
\begin{equation}  \label{equation: derive formula for  gammas 2}
\sum_{k=1}^d\frac{\kappa_k}{u_j+u_k}=1+\frac1{2u_j},
\end{equation}
for $j=1,\dots,r$.  Now it is shown in ~\cite{ariki-mathas-rui}, page 66, that the unique solution to this system of equations is $\kappa_j = \gamma_j$  for $1 \le j \le 1$.   Finally, we have
\begin{equation}
\omega_j v_0 = e_1 x_1^j v_0 = e_1 x_1^j (\sum_i m_i) = e_1 \sum_i u_i^j m_i =
(\sum_i  u_i^j  \gamma_i)  v_0.
\end{equation}
This shows (3) $\implies$ (2).  

Finally,  (2) $\implies$ (1)  by  Theorem A of ~\cite{ariki-mathas-rui}, namely  assuming
(2),  
$W$ has an $R$--basis that includes $\{e_1, x_1 e_1, \dots, x_1^{r-1} e_1\}$, so the latter set is linearly independent.  
 \end{proof}  
 
 The elements $\gamma_j$ appearing in Theorem \ref{theorem:  ariki mathas rui} are rational functions in $u_1, \dots, u_r$ with singularities at $u_i = u_j$,  but it is shown in ~\cite{ariki-mathas-rui} that the elements
$\sum_i \gamma_i u_i^a$  are polynomials in $u_1, \dots, u_r$, as follows:  

Let $\ubold_1, \dots, \ubold_r$ and $t$  be algebraically independent indeterminants over $\Z$.   Define symmetric polynomials $q_a(\ubold)$  in $\ubold_1, \dots, \ubold_r$ by
$$\prod_{i=1}^r\frac{1+\ubold_it}{1-\ubold_it}=\sum_{a\ge0}q_a(\ubold)t^a.$$
The polynomials $q_a$  are known as {\em Schur  $q$--functions}.
Let $\gamma_j(\ubold)$  be defined by  (\ref{equation: definition of the gammas})  with
$u_i$  replaced by $\ubold_i$.   Moreover,  let $\eta_a(\ubold) = \sum_{i = 1}^r  \gamma_j(\ubold) \ubold_j^a$  for $a \ge 0$.   Then  (\cite{ariki-mathas-rui},  Lemma 3.5)
\begin{equation}
\eta_a(\ubold) = q_{a+1}(\ubold) - \frac{1}{2}(-1)^r q_a(\ubold) + \frac{1}{2}\delta_{a, 0}. 
\end{equation}
In particular the $\eta_a$ are polynomials in $\ubold_1, \dots, \ubold_r$.  

\begin{corollary}  Let $F$ be  a field of characteristic $\ne 2$, with parameters
 $\omega_a$ for $a \ge 0$  and $u_1, \dots, u_r$.  Assume that the $u_i$ are distinct and
 $u_i + u_j \ne 0$ for all $i, j$.    Let $W = W_{2, F}$  be the degenerate cyclotomic BMW algebra defined over $F$ with parameters $\omega_a$ for $a \ge 0$  and $u_1, \dots, u_r$.
If $\{e_1, x_1 e_1, \dots, x_1^{r-1} e_1\} \subseteq  W e_1$ is linearly independent over $F$, and $e_1 W e_1 \ne 0$, then    
\begin{equation}
\omega_a = q_{a+1}(u_1, \dots, u_r) - \frac{1}{2}(-1)^r q_a(u_1, \dots, u_r) + \frac{1}{2}\delta_{a, 0}. 
\end{equation}
\end{corollary}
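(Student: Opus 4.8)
The plan is to read the corollary off directly from Theorem~\ref{theorem: ariki mathas rui} together with the polynomial identity for $\eta_a$ recorded just above it, so that the only genuine work is to justify passing from the generic identity in indeterminates to the concrete values $u_1, \dots, u_r \in F$.

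First I would observe that the hypothesis of the corollary---that $\{e_1, x_1 e_1, \dots, x_1^{r-1} e_1\}$ is linearly independent over $F$ and $e_1 W e_1 \ne 0$---is precisely condition (1) of Theorem~\ref{theorem: ariki mathas rui}. Invoking the implication (1) $\Longrightarrow$ (2) established there, I obtain that for every $a \ge 0$,
$$\omega_a = \sum_{i=1}^r \gamma_i\, u_i^a,$$
where $\gamma_i$ is given by (\ref{equation: definition of the gammas}). This is the entire algebraic input; everything that remains is an identity of symmetric functions.

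Next I would recognize the right-hand side as a specialization of the symmetric rational function $\eta_a(\ubold) = \sum_{i=1}^r \gamma_i(\ubold)\, \ubold_i^a$ in the algebraically independent indeterminates $\ubold_1, \dots, \ubold_r$. As a rational function $\eta_a$ has possible poles only along the diagonals $\ubold_i = \ubold_j$; since the $u_i$ are assumed distinct these denominators do not vanish under $\ubold_i \mapsto u_i$, so the substitution is legitimate and produces exactly $\sum_i \gamma_i u_i^a = \omega_a$. I would then apply the identity quoted before the corollary, $\eta_a(\ubold) = q_{a+1}(\ubold) - \tfrac{1}{2}(-1)^r q_a(\ubold) + \tfrac{1}{2}\delta_{a, 0}$, which exhibits $\eta_a$ as an element of $\Z[\tfrac12][\ubold_1, \dots, \ubold_r]$.

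Finally I would transfer this polynomial identity to $F$. Because $F$ has characteristic $\ne 2$ the element $2$ is invertible, so there is a ring homomorphism $\Z[\tfrac12][\ubold_1, \dots, \ubold_r] \to F$ sending $\ubold_i \mapsto u_i$; applying it to the displayed identity gives
$$\omega_a = q_{a+1}(u_1, \dots, u_r) - \tfrac{1}{2}(-1)^r q_a(u_1, \dots, u_r) + \tfrac{1}{2}\delta_{a, 0},$$
as claimed. The one point requiring care---the main, and rather mild, obstacle---is the compatibility of the two descriptions of $\eta_a$ under specialization: the rational form $\sum_i \gamma_i(\ubold)\ubold_i^a$ and the polynomial form agree as rational functions, hence agree at every point where the original denominators are nonzero, and distinctness of the $u_i$ guarantees precisely this. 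With that in hand the conclusion is immediate.
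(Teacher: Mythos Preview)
Your proof is correct and follows exactly the route the paper intends: the corollary is stated without proof because it is meant to be read off immediately from Theorem~\ref{theorem:  ariki mathas rui}~(1)$\Rightarrow$(2) together with the identity $\eta_a(\ubold) = q_{a+1}(\ubold) - \tfrac{1}{2}(-1)^r q_a(\ubold) + \tfrac{1}{2}\delta_{a,0}$ recorded just before it. Your extra care about the legitimacy of the specialization $\ubold_i \mapsto u_i$ is a welcome bit of rigor but not something the paper pauses over, since the identity already shows $\eta_a$ is a polynomial.
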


This motivates the following definition, which makes sense for arbitrary $u_1, \dots, u_r$: 

\begin{definition}[\cite{ariki-mathas-rui}]  Let $R$ be a commutative ring with parameters $\omega_a$ ($a \ge 0$) and $u_1, \dots, u_r$.  Suppose that $2$ is invertible in $R$.   Say that the parameters are $u$--admissible if 
\begin{equation} \label{equation: u admissibility relations}
\omega_a = q_{a+1}(u_1, \dots, u_r) + \frac{1}{2} (-1)^{r-1}q_a(u_1, \dots, u_r)  + \frac{1}{2} \delta_{a, 0}
\end{equation}
 for all $a \ge 0$. 
\end{definition}

\section{Admissibility}    We fix a ground ring $R$ with parameters $\omega_a$   ($a \ge 0$)  and
$u_1, \dots, u_r$.     We consider the two strand degenerate cyclotomic BMW algebra over $R$,
$W = W_{2, r}(u_1, \dots, u_r)$ and we write $e = e_1$,  $s = s_1$,  and $x = x_1$.

\begin{lemma}  \label{lemma: linear independence implies admissibility}
Suppose that $\{ e, x e, \dots, x^{r-1} e\}$ is linearly independent over $R$.  Then the parameters $\omega_a$  ($a \ge 0$)  and $u_1, \dots, u_r$  are weakly admissible and satisfy the following relations:
\begin{equation}  \label{equation:  admissibility relation}
 \sum_{\mu = 0}^{r-j-1}  \omega_\mu  a_{\mu + j +1}  = - 2 \delta_{(\text{$r-j$ is odd})}\  a_j  + \delta_{(\text{$j$ is even})} \  a_{j+1},
\end{equation}
for $0 \le j \le r-1$. 
\end{lemma}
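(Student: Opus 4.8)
The plan is to read off weak admissibility immediately, and then to extract the relations (\ref{equation: admissibility relation}) by applying the $R$--linear map $\xi \mapsto s\,\xi\,e$ to the cyclotomic relation and matching coefficients in the basis $\{e, xe, \dots, x^{r-1}e\}$. By Lemma \ref{lemma: spanning lemma}(1) these $r$ elements span $W e$, so the linear independence hypothesis makes them a basis; in particular $c\,e = 0$ forces $c = 0$, so $e$ is not a torsion element over $R$. Hence Corollary \ref{corollary: weak admissibility conditions} applies and the weak admissibility relations hold.

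For the remaining relations, recall that in $W$ the cyclotomic relation reads $\sum_{j=0}^r a_j x^j = 0$, with $a_r = 1$. Multiplying on the left by $s$ and on the right by $e$ gives $\sum_{j=0}^r a_j\, s x^j e = 0$. I would now evaluate each summand: for $j = 0$ the tangle relation $se = e$ contributes $a_0 e$, while for $1 \le j \le r$ equation (\ref{equation: s x^a e}) rewrites $s x^j e$ as an explicit $R$--combination of $e, xe, \dots, x^{j} e$. Collecting terms expresses the identity as $\sum_{k=0}^{r} C_k\, x^k e = 0$, where each $C_k$ is an explicit scalar built from the $a_j$, the $\omega_\mu$, and parity indicators. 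The only contribution to the top power $x^r e$ is the diagonal term $(-1)^r a_r x^r e$ of (\ref{equation: s x^a e}), so $C_r = (-1)^r$.

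The next step is to reduce to the basis. Using the cyclotomic relation in the form $x^r e = -\sum_{k=0}^{r-1} a_k x^k e$ and substituting gives $\sum_{k=0}^{r-1}(C_k - C_r a_k)\, x^k e = 0$, whence linear independence forces $C_k = (-1)^r a_k$ for $0 \le k \le r-1$. Writing $C_k$ out, reindexing the inner sum from (\ref{equation: s x^a e}) as $\sum_{\mu = 0}^{r-k-1} \omega_\mu a_{\mu + k + 1}$, and simplifying the resulting scalar identity yields (\ref{equation: admissibility relation}) after renaming $k$ as $j$.

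The substantive work, and the place to take care, is the coefficient bookkeeping: tracking how the three pieces of (\ref{equation: s x^a e}) feed into each $C_k$, correctly reindexing the inner $\omega$--sum, and collapsing the sign data via $(-1)^{r+k} - 1 = -2\,\delta_{(r-k \text{ is odd})}$ together with $(-1)^k \delta_{(k \text{ is even})} = \delta_{(k \text{ is even})}$. These identities convert the matched coefficients $C_k = (-1)^r a_k$ precisely into the stated relations, and no input beyond linear independence and the two applications of the cyclotomic relation is required.
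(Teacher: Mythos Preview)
Your argument is correct and follows essentially the same strategy as the paper: expand using (\ref{equation: s x^a e}), reduce via the cyclotomic relation, and match coefficients in the basis $\{x^j e\}_{0 \le j \le r-1}$.  The only difference is the starting identity --- the paper applies the skein relation in the form $(sx + xs + 1 - e)x^{r-1}e = 0$ (using that $x_2\xi = -x\xi$ for $\xi \in We$), whereas you left--multiply the cyclotomic relation by $s$ and right--multiply by $e$; both routes collapse to the same linear system after the bookkeeping you describe.
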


\begin{proof}  Since $\{ e, x e, \dots, x^{r-1} e\}$ is assumed linearly independent over $R$,  in particular
$e$ is not a torsion element over $R$,  and hence $R$ is weakly admissible by Corollary \ref{corollary:   weak admissibility conditions}.  

If $r  = 1$,   (\ref{equation:  admissibility relation})  reduces to the single equation
$\omega_0 + 2 a_0 - 1 = 0$,  which follows from  $(s x + xs + 1-e) e = 0$,  together with
$x = u_1 = -a_0$  and $s e = e$.  
Assume $r \ge 2$.  We have
$$
0 = (s x - x_2 s + 1 -e ) x^{r-1} e =  (s x + x s + 1 - e) x^{r-1} e.
$$
Apply the identity $x (x^{r-1} e) =  -\sum_{j = 0}^{r-1} a_j x^j e$ as well as the identity (\ref{equation: s x^a e}) and simplify.    This gives:
$$
\begin{aligned}
0= &-a_0 e - \sum_{j=1}^{r-1} (-1)^j a_j x^j e  + \sum_{\substack{0 \le j \le r-2\\ j \text{ even }}} a_{j+1} x^j e
+ \sum_{j=0}^{r-2} (-1)^j \left(\sum_{\mu = 0}^{r-j-2} \omega_\mu a_{\mu + j + 1} \right ) x^j e \\
&+ (-1)^r \sum_{j= 0}^{r-1} a_j x^j e - \delta_{(\text{$r$ is even})}\ x^{r-1} e + \sum_{j =1}^{r-1} \omega_{r-1-j} x^je \\
&+ x^{r-1} e -  \omega_{r-1} e,
\end{aligned}
$$
where the three lines of the display correspond to evaluation of $s x x^{r-1} e$,   $x s x^{r-1} e$,  and
$(1 -e) x^{r-1} e$.    Because $\{ e, x e, \dots, x^{r-1} e\}$ is assumed to be linearly independent,  the coefficient of $x^j e$ is zero for each $j$,   $0 \le j \le r-1$.     Extracting the coefficients yields (\ref{equation:  admissibility relation}).    Here one has to treat the three cases $j = 0$,  \ $1 \le j \le r-2$,  and $j = r-1$  separately,  but  the result in all three cases  is the same.  
\end{proof}

\begin{definition} Say that the parameters $\omega_a$  ($a \ge 0$) and $u_1, \dots, u_r$  are {\em admissible}
(or that the ground ring $R$ is admissible)  if  the relations (\ref{equation:  admissibility relation}) hold for
$0 \le j \le r-1$  and   $ \sum_{\mu = 0}^r  a_\mu  \omega_{\mu + a}  = 0$ holds   for all $a \ge 0$. 
\end{definition}

\begin{remark}  Admissibility is analogous to the admissibility condition of Wilcox and Yu for the cyclotomic BMW algebras ~\cite{Wilcox-Yu}.   Our terminology differs from that in ~\cite{ariki-mathas-rui},  where admissibility means essentially what we have called weak admissibility.  
\end{remark}

\begin{lemma} \label{lemma: universal polynomials}
 There exist universal polynomials $H_a(\ubold_1, \dots, \ubold_r)$  ($a \ge 0$),  symmetric in $\ubold_1, \dots \ubold_r$,    such that whenever
$R$ is an admissible ring, one has 
\begin{equation} \label{equation:  omegas in terms of a's 1}
\omega_a = \break H_a(u_1, \dots, u_r)
\end{equation}
  for $a \ge 0$.    
\end{lemma}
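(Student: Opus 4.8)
The plan is to read the defining relations of admissibility as a system that determines the $\omega_a$ by pure substitution, with no division ever required, and to record the resulting expressions as universal integer polynomials in the signed elementary symmetric functions. First I would pass to the universal situation, regarding $\ubold_1, \dots, \ubold_r$ as algebraically independent indeterminates over $\Z$ and writing $\abold_j = (-1)^{r-j}\eps_{r-j}(\ubold_1, \dots, \ubold_r)$, so that $\abold_r = 1$ and each $\abold_j$ is symmetric in the $\ubold_i$. Everything below then takes place in $\Z[\abold_0, \dots, \abold_{r-1}]$, a subring of the ring of symmetric polynomials in $\ubold_1, \dots, \ubold_r$.

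The key structural observation is that the relation (\ref{equation:  admissibility relation}) indexed by $j$ involves only the unknowns $\omega_0, \dots, \omega_{r-j-1}$, with the top one $\omega_{r-j-1}$ occurring with coefficient $\abold_{(r-j-1)+j+1} = \abold_r = 1$, while its right-hand side is a $\Z$-linear combination of $\abold_j$ and $\abold_{j+1}$. Hence, reading these relations in the order $j = r-1, r-2, \dots, 0$, each successive equation expresses one new unknown, namely $\omega_{r-j-1}$, in terms of $\abold_0, \dots, \abold_{r-1}$ and the previously determined $\omega_0, \dots, \omega_{r-j-2}$. By induction this yields universal polynomials $H_0, \dots, H_{r-1} \in \Z[\abold_0, \dots, \abold_{r-1}]$ with $\omega_k = H_k$, each symmetric in the $\ubold_i$ since the $\abold_j$ are. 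I would then extend to $a \ge r$ using the second family of admissibility relations: because $\abold_r = 1$, the relation $\sum_{\mu = 0}^r \abold_\mu \omega_{\mu + a} = 0$ rearranges to $\omega_{a + r} = -\sum_{\mu=0}^{r-1}\abold_\mu \omega_{\mu+a}$, a linear recurrence defining $H_{a+r}$ as a $\Z$-polynomial in $\abold_0, \dots, \abold_{r-1}$ and $H_a, \dots, H_{a+r-1}$, hence again a symmetric polynomial in the $\ubold_i$. This constructs $H_a$ for all $a \ge 0$, once and for all.

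Finally, to obtain the statement for an arbitrary admissible ring $R$, I would specialize $\ubold_i \mapsto u_i$. The crucial point is that the construction used only substitution and the recurrence and never inverted any element, so the identical computation is valid over $R$: the relations (\ref{equation:  admissibility relation}) force $\omega_0, \dots, \omega_{r-1}$ to equal $H_0(u), \dots, H_{r-1}(u)$ by the same forward substitution, and the second family then forces $\omega_a = H_a(u)$ for all $a \ge r$ by the same recurrence. I expect the only delicate point to be the verification of the structural observation itself, that in every relation (\ref{equation:  admissibility relation}) the leading coefficient is uniformly $\abold_r = 1$; this requires treating the boundary cases $j = 0$, $1 \le j \le r-2$, and $j = r-1$ separately, exactly as in Lemma \ref{lemma: linear independence implies admissibility}, after which the triangular solvability, and hence the whole argument, is routine.
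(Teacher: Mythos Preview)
Your proposal is correct and is essentially the same argument as the paper's: both observe that the system (\ref{equation:  admissibility relation}) is unitriangular in $\omega_0,\dots,\omega_{r-1}$ (since the top coefficient in each equation is $a_r=1$), solve by back-substitution to get polynomials in the $a_j$, and then use the recurrence $\sum_{\mu=0}^r a_\mu \omega_{\mu+a}=0$ to extend to all $a\ge r$. One small remark: the boundary-case analysis you anticipate is not actually needed here, since the leading coefficient being $a_r=1$ is immediate from the displayed form of (\ref{equation:  admissibility relation}); that case split was only needed in Lemma~\ref{lemma: linear independence implies admissibility} to \emph{derive} the relation, not to use it.
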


\begin{proof}   The system of relations (\ref{equation:  admissibility relation}) is a unitriangular linear system of equation for the variables $\omega_0,  \dots, \omega_{r-1}$.  In fact, if we list the equations in reverse order
then the matrix of coefficients is
$$
\begin{bmatrix}
1 & & & &\\
a_{r-1} & 1 & && \\
a_{r-2} & a_{r-1} & 1 & &\\
\vdots  & & \ddots& \ddots & \\
a_1   &a_2  &\dots&a_{r-1}& 1\\
\end{bmatrix}.
$$
Solving the system for $\omega_0,  \dots, \omega_{r-1}$  gives  these quantities as polynomial functions of 
$a_0,  \dots, a_{r-1}$,  thus symmetric polynomials in $u_1, \dots, u_r$.    The relations
 $ \sum_{j = 0}^r  a_j  \omega_{j + m}  = 0$,  for all $m \ge 0$ yield (\ref{equation:  omegas in terms of a's 1})  for
 $a \ge r$.  
\end{proof}

\section{Equivalence of admissibility conditions}

In this section we will show that admissibility and $u$--admissibility are equivalent (assuming that $2$ is invertible in the ground ring, so that $u$--admissibility makes sense.)  

First, we will obtain the polynomials $H_a$ of Lemma \ref{lemma: universal polynomials} explicitly in terms of the Schur $q$--functions.
Considering the generating function for the Schur $q$--functions, 
\begin{equation} \label{equation:  Schur q functions 1}
\prod_{i = 1}^r \frac{1 + \ubold_i t}{1 - \ubold_i t} =   \sum_{a \ge 0} q_a(\ubold) t^a,
\end{equation}
 we have
\begin{equation} \label{equation:  Schur q functions 2}
\left(\prod_{i = 1}^r (1 - \ubold_i t)\right )   \left(\sum_{a \ge 0} q_a(\ubold) t^a\right ) = \prod_{i = 1}^r (1 + \ubold_i t).
\end{equation}
Taking into account that $q_0(\ubold) = 1$, we also have
\begin{equation} \label{equation:  Schur q functions 3}
\left(\prod_{i = 1}^r (1 - \ubold_i t)\right )   \left(\sum_{a \ge 1} q_a(\ubold) t^a\right ) = \prod_{i = 1}^r (1 + \ubold_i t) -   \prod_{i = 1}^r (1 - \ubold_i t).
\end{equation}
Matching coefficients in (\ref{equation:  Schur q functions 2}) and writing in terms of the signed elementary symmetric functions $a_i(\ubold)$ gives
\begin{equation}  \label{equation:  Schur q functions 4}
\sum_{\mu = 0}^{r-j-1} q_\mu(\ubold)  a_{\mu + j + 1}(\ubold) =   (-1)^{r - j -1}  a_{j+1}(\ubold), \quad \text{for\  $0 \le j \le r-1$,}
\end{equation}
and, moreover, 
\begin{equation} \label{equation: Schur q functions 8}
 \sum_{\mu = 0}^r  a_\mu(\ubold)  q_{\mu + a}(\ubold)  = 
\begin{cases}   (-1)^r a_0(\ubold) & \text{if $a = 0$,}
\\
0 &\text{if $a \ge 1$.}
\end{cases}
\end{equation}
Doing the same with    (\ref{equation:  Schur q functions 3}) yields
\begin{equation}  \label{equation:  Schur q functions 5}
\sum_{\mu = 0}^{r-j-1} q_{\mu+1}(\ubold)  a_{\mu + j + 1}(\ubold) =  -2 \delta_{(\text{$r-j$ is odd})} \  a_{j}(\ubold), \quad \text{for $0 \le j \le r-1$.}
\end{equation}
If we set $$\eta_{a}(\ubold) =  q_{a+1}(\ubold) + \frac{1}{2} (-1)^{r-1}q_a(\ubold)  + \frac{1}{2} \delta_{a, 0},$$ then, using (\ref{equation:  Schur q functions 4}) and (\ref{equation:  Schur q functions 5}), we get
\begin{equation}  \label{equation:  Schur q functions 6}
\sum_{\mu = 0}^{r-j-1} \eta_{\mu}(\ubold)  a_{\mu + j + 1}(\ubold) =  -2 \delta_{(\text{$r-j$ is odd})} \  a_{j}(\ubold)
 + \delta_{(\text{$j$ is even})} \  a_{j+1}(\ubold),
\end{equation}
for $0 \le j \le r-1$.  
From (\ref{equation: Schur q functions 8}), we  obtain
\begin{equation} \label{equation:  Schur q functions 7}
\sum_{\mu = 0}^r  a_\mu(\ubold)  \eta_{\mu + a}(\ubold) = 0, \quad \text{for all $a \ge 0$}.
\end{equation}

\begin{lemma}  \label{lemma:  admissible if and only if u admissible}
Let $R$ be commutative ring in which $2$ is invertible.  Let
$\omega_a$  ($a \ge 0$)  and $u_1, \dots, u_r$  be parameters in $R$.     Then  the parameters are
admissible, if, and only if, they are $u$--admissible.  
 
\end{lemma}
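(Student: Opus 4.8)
The plan is to reduce the entire equivalence to a single polynomial identity in $\Z[\ubold_1, \dots, \ubold_r]$: that the universal polynomials $H_a$ furnished by Lemma \ref{lemma: universal polynomials} coincide with
\[
\eta_a(\ubold) = q_{a+1}(\ubold) + \tfrac{1}{2}(-1)^{r-1}q_a(\ubold) + \tfrac{1}{2}\delta_{a,0}.
\]
Once $H_a = \eta_a$ is established, both implications drop out by specializing $\ubold_i \mapsto u_i$, since $u$--admissibility is by definition the assertion that $\omega_a = \eta_a(u_1, \dots, u_r)$ for every $a$.

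I would first record that admissibility of $R$ is equivalent to the single family of equations $\omega_a = H_a(u_1, \dots, u_r)$, $a \ge 0$. Lemma \ref{lemma: universal polynomials} supplies one direction. For the converse, the $H_a$ are built precisely so as to satisfy the universal admissibility relations --- the unitriangular system obtained from (\ref{equation: admissibility relation}) by replacing $\omega_\mu$ with $H_\mu(\ubold)$ and $a_\mu$ with $a_\mu(\ubold)$, together with the recursion $\sum_{\mu=0}^r a_\mu(\ubold)H_{\mu+a}(\ubold)=0$ valid for all $a \ge 0$ --- so specializing $\ubold_i\mapsto u_i$, under which $a_\mu(\ubold)\mapsto a_\mu$, shows that any parameters with $\omega_a = H_a(u)$ satisfy the two defining relations of admissibility.

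The crux is then to verify that $\{\eta_a(\ubold)\}$ satisfies these same universal relations, which is exactly what identities (\ref{equation: Schur q functions 6}) and (\ref{equation: Schur q functions 7}) say: (\ref{equation: Schur q functions 6}) is the unitriangular system with $\eta_\mu$ in place of $\omega_\mu$, and (\ref{equation: Schur q functions 7}) is the recursion. Since $a_r(\ubold)=1$, the coefficient matrix of the system (listed in reverse order as in Lemma \ref{lemma: universal polynomials}) is lower unitriangular, hence invertible over $\Z[\tfrac{1}{2}][\ubold]$; this forces $\eta_\mu = H_\mu$ for $0 \le \mu \le r-1$, and the common recursion (\ref{equation: Schur q functions 7}) then propagates the equality to all $a$. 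Thus $H_a = \eta_a$ as elements of $\Z[\tfrac{1}{2}][\ubold]$, and combining this with the previous paragraph yields: admissible $\Longleftrightarrow \omega_a = H_a(u) \Longleftrightarrow \omega_a = \eta_a(u) \Longleftrightarrow u$--admissible.

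I expect the only delicate points to be organizational: checking that (\ref{equation: Schur q functions 6}) and (\ref{equation: Schur q functions 7}) are genuinely term-for-term the substituted admissibility relations, and confirming that the uniqueness step is legitimate over a ring in which $2$ is a unit (so that the $\tfrac{1}{2}$'s appearing in $\eta_a$ cause no trouble). Because the generating-function identities for the Schur $q$--functions have already been derived, no further manipulation of the $q_a$ is required, and the argument is essentially a uniqueness-of-solution statement for a triangular linear system together with a specialization $\ubold_i \mapsto u_i$.
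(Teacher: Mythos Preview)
Your proposal is correct and follows essentially the same approach as the paper: both arguments hinge on the identities (\ref{equation:  Schur q functions 6}) and (\ref{equation:  Schur q functions 7}) to show that the $\eta_a$ satisfy the admissibility relations, and then invoke the uniqueness of solutions to the unitriangular system from Lemma~\ref{lemma: universal polynomials}. The only difference is packaging---you route the argument through the explicit identification $H_a=\eta_a$ in $\Z[\tfrac12][\ubold]$ before specializing, whereas the paper argues the two implications directly over $R$---but the mathematical content is the same.
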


\begin{proof}  By definition, the parameters are $u$--admissible if $\omega_a = \eta_a(u_1, \dots, u_r)$  for all 
$a \ge 0$.  It follows from
(\ref{equation:  Schur q functions 6})  and (\ref{equation:  Schur q functions 7})  that $u$--admissible parameters are admissible.

On the other hand, if the parameters are admissible, then
the relations (\ref {equation:  admissibility relation}) for $0 \le j \le r-1$   and  $ \sum_{\mu = 0}^r  a_\mu  \omega_{\mu + a}  = 0$  for $a \ge 0$  uniquely determine the $\omega_a$ for all $a \ge 0$ as symmetric polynomial functions
of $u_1, \dots, u_r$.  But according to (\ref{equation:  Schur q functions 6}) and (\ref{equation:  Schur q functions 7}),   the elements $\eta_a(u_1, \dots, u_r)$  satisfy the same relations.  Hence  $\omega_a = \eta_a(u_1, \dots, u_r)$  for $a \ge 0$,  so the parameters are $u$--admissible.  
\end{proof}

\vbox{
\begin{theorem} Let $R$ be a commutative ring with parameters $\omega_a$  ($a \ge 0$)  and 
$u_1, \dots, u_r$.   Suppose that \ $2$ is invertible in $R$.  
Consider the two strand degenerate cyclotomic BMW algebra over $R$,
$W = W_{2, r}(u_1, \dots, u_r)$.
The following are equivalent:
\begin{enumerate}
\item  $\{ e_1, x_1 e_1, \dots, x_1^{r-1} e_1\}$ is linearly independent over $R$.
\item $\{ x_1^a e_1 x_1^b, x_1^a x_2^b s_1, x_1^a x_2^b  : 0 \le a, b \le r-1\} $ is linearly independent over $R$.  
\item  The parameters are admissible.
\item The parameters are $u$--admissible.
\end{enumerate}
\end{theorem}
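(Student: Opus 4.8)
The plan is to establish the cycle $(2)\Rightarrow(1)\Rightarrow(3)\Leftrightarrow(4)\Rightarrow(2)$, treating $\{(3),(4)\}$ as a single node via Lemma~\ref{lemma:  admissible if and only if u admissible}. Three of the links are immediate from the preceding results. The implication $(2)\Rightarrow(1)$ is trivial, since $\{e_1,x_1e_1,\dots,x_1^{r-1}e_1\}$ is the subfamily $\{x_1^a e_1 x_1^b : b=0\}$ of the set in $(2)$, and a subset of a linearly independent set is linearly independent. The implication $(1)\Rightarrow(3)$ is essentially Lemma~\ref{lemma: linear independence implies admissibility}: linear independence of $\{e_1,\dots,x_1^{r-1}e_1\}$ forces the relations \eqref{equation:  admissibility relation} and, since $e_1$ is then non--torsion, weak admissibility; the second family of the weak admissibility relations is exactly $\sum_{\mu=0}^r a_\mu\omega_{\mu+a}=0$ for all $a\ge 0$, and together the two families constitute the definition of admissibility. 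Finally $(3)\Leftrightarrow(4)$ is Lemma~\ref{lemma:  admissible if and only if u admissible}.

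The substance is therefore $(4)\Rightarrow(2)$, which I would prove by passing to a generic ground ring and specializing. Let $\A=\Z[\tfrac12][\ubold_1,\dots,\ubold_r]$ be the polynomial ring in indeterminates $\ubold_1,\dots,\ubold_r$ with $2$ inverted, equipped with the parameters $u_i=\ubold_i$ and $\omega_a=\eta_a(\ubold)$ of \eqref{equation: u admissibility relations}; by construction $\A$ is $u$--admissible. Writing $\mathbb F=\Q(\ubold_1,\dots,\ubold_r)$ for its fraction field, the $\ubold_i$ are distinct with $\ubold_i+\ubold_j\ne 0$ and $\operatorname{char}\mathbb F=0$, so all hypotheses of Theorem~\ref{theorem:  ariki mathas rui} hold over $\mathbb F$; moreover condition~(2) of that theorem is satisfied, because $\omega_a=\eta_a(\ubold)=\sum_i\gamma_i(\ubold)\ubold_i^a$ and $\eta_0(\ubold)$ is a non--zero polynomial. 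Invoking Theorem~A of \cite{ariki-mathas-rui} (exactly the basis theorem used in the proof of Theorem~\ref{theorem:  ariki mathas rui}), $W_{2,\mathbb F}$ is free of rank $r^2(2\cdot 2-1)!!=3r^2$. Since the set $B=\{x_1^a e_1 x_1^b,\ x_1^a x_2^b s_1,\ x_1^a x_2^b : 0\le a,b\le r-1\}$ spans $W_{2,\mathbb F}$ by Lemma~\ref{lemma: spanning lemma}(2) and has cardinality $3r^2$, it is a basis; in particular $B$ is linearly independent over $\mathbb F$.

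I would then descend this to $\A$ and specialize. As $\A$ is a domain embedding in $\mathbb F$, any $\A$--linear relation among the elements of $B$ in $W_{2,\A}$ maps to an $\mathbb F$--linear relation with the same scalar--extended coefficients in $W_{2,\mathbb F}=W_{2,\A}\otimes_\A\mathbb F$, and linear independence over $\mathbb F$ forces those coefficients to vanish; combined with the spanning statement, $W_{2,\A}$ is free with basis $B$. Now let $R$ be any $u$--admissible ring and let $\phi\colon\A\to R$ be the homomorphism $\ubold_i\mapsto u_i$. Because $R$ is $u$--admissible, $\phi$ carries the generic parameters $\omega_a=\eta_a(\ubold)$ to $\eta_a(u_1,\dots,u_r)=\omega_a$ and the generic $a_j(\ubold)$ to $a_j$, so the defining presentations base change and $W_{2,R}\cong W_{2,\A}\otimes_\A R$. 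Freeness is preserved under base change, so the image of $B$ is an $R$--basis of $W_{2,R}$; in particular $B$ is linearly independent over $R$, which is $(2)$.

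The main obstacle is the bookkeeping of this last step rather than any single hard estimate. One must check that $\phi$ is well defined as a morphism of \emph{parametrized} algebras, and it is precisely here that $u$--admissibility is indispensable: only the relations \eqref{equation: u admissibility relations} guarantee that the parameters of $R$ are the $\phi$--images of the generic parameters $\eta_a(\ubold)$, so that $W_{2,R}$ really is the base change of $W_{2,\A}$. Once $W_{2,\A}$ is known to be free on $B$, the base change to $R$ is automatic, and the genuine content is the generic freeness, which rests on the basis theorem of \cite{ariki-mathas-rui}.
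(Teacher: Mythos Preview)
Your argument is correct and follows the same cycle $(2)\Rightarrow(1)\Rightarrow(3)\Leftrightarrow(4)\Rightarrow(2)$ as the paper, invoking Lemma~\ref{lemma: linear independence implies admissibility}, Lemma~\ref{lemma:  admissible if and only if u admissible}, and ultimately Theorem~A of \cite{ariki-mathas-rui} for $(4)\Rightarrow(2)$. The only difference is that the paper cites Theorem~A directly over the given ring $R$ (it is stated for arbitrary $u$--admissible ground rings), so your detour through the generic ring $\A=\Z[\tfrac12][\ubold_1,\dots,\ubold_r]$, its fraction field, and base change to $R$ is sound but unnecessary.
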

}

\begin{proof}  
 Lemma  \ref{lemma: linear independence implies admissibility} gives (1) $\implies$ (3). 
Lemma \ref{lemma:  admissible if and only if u admissible} gives (3)  $\Longleftrightarrow$  (4).  
The implication  (4) $\implies$  (2)  is part of  the main result (Theorem A)  of  ~\cite{ariki-mathas-rui}.  Finally (2) $\implies$ (1) is trivial.  
\end{proof}

If the equivalent conditions of the theorem hold,  then the sets in (1) and (2) are $R$--bases
of $\bmw {2, R} e_1$,  respectively of $\bmw {2, R}$,  since they are spanning by Lemma \ref{lemma: spanning lemma}.   If $R$ is an integral domain the conditions are equivalent to:
(1$'$)  $\bmw {2, R}e_1$ is free over $R$ of rank $r$,  respectively 
(2$'$)  $\bmw {2, R}$ is free over $R$ of rank $3 r^2$.

\bibliographystyle{amsplain}
\bibliography{admissibility} 

\end{document}